\definecolor{cobalt}{RGB}{61,99,181}
\newtheorem{thm}{Theorem}[section]
\newtheorem{defi}[thm]{Definition}
\newtheorem{lem}[thm]{Lemma}
\numberwithin{equation}{section}
\date{\today}
\newcommand{\Rmnum}[1]{\expandafter\@slowromancap\romannumeral #1@}
\begin{document}

	\title[Height estimate in$\rho R\times M^2$ ]{Height estimate for special weingarten surfaces of elliptic type in $ R\times_{\rho} M^2$}
	\author[Xia Liu]{Xia LIu}
	\address{
		\textsuperscript{1}
		Chongqing University}\
	\email{XIA1246249827@163.com}
	
	\keywords{height estimate;  special weingarten surface;  warped product.}

	\begin{abstract}
		In this paper, we provide a vertical height estimate for compact special Weingarten surface of elliptic type in  $ R\times_{\rho} M^2$,i.e. surface whose mean curvature $H$ and extrinsic Gauss curvature $K_{e}$ satisfy $H=f(H^2-K_{e})$ with $4x(f'(x))^2<1$,for all $x\in [0,+\infty )$.
	\end{abstract} \maketitle
	
	\section{Introduction}\label{S1}
	
	In this paper we will consider special Weingarten surfaces of elliptic type in $ R\times_{\rho} M^2$.If $H$ and $K_{e}$ denote the mean curvature and the extrinsic Guass curvature of a surface $\Sigma $ respectively,then  $\Sigma $ is called a special Weingarten surface if the following identity holds:$$H=f(H^2-K_{e}) $$,
	with $f\in \mathcal{C} ^{0} ([0,+\infty ))$.Furthermore if $f\in \mathcal{C} ^{1} ([0,+\infty ))$ and $4x(f'(x))^2<1,\forall x\in [0,+\infty )$,then $f$ is said to be elliptic and $\Sigma $ is said to be a special Weingarten surface of elliptic type,henceforth called a SWET surface.\
	
	The problem of mean curvature has been a hot topic in geometric directions, especially for minimal surfaces, i.e., where the mean curvature is equal to 0. In the recent years, the study of constant mean curvature problems has gradually become popular,such as the study of height estimate for a surface  .\
	
	The height estimate was first obtained by Heinz in 1969\cite{Al1}, he showed that a compact graph with positive constant mean curvature (H-surface) in a three-dimensional Euclidean space, which boundary is in the plane, will have a maximum value of $\frac{1}{H}$  .Addtionally,Serrin also get similar conclusion in \cite{Al2}. \
	
	In 1992,Korevaar,N,et al.  \cite{Al3}  proved the height estimate of surfaces in Hyperbolic space.And In 1993, H.Rosenberg \cite{Al4} proved that a compact hypersurface with positive constant higher order mean curvature ,embedding in n-dimensional Euclidean space,which boundary value is equal to 0 ,has a maximum height of $2(H_{r} +1)^{\frac{1}{r+1} } $. The auther popularized the research of Heinz \cite{Al1} and Korevaar,N,et al. \cite{Al3} by generalizing graph to embedded hypersurfaces and constant mean curvature to constant higher order mean curvature.\
	
	Since the height estimate of surfaces have results in Euclidean space, does it have a similar conclusion in manifolds? In 2003, Hoffman,H et al. \cite{Al5} proved that in the product space $M \times  [a,\infty 
	)$,the compact and embedded $H$-surface, whose boundary is in $M \times {a}$, has a maximum of its height if the Gaussian curvature of $M$ satisfies $K_{M}\ge 2\tau,\tau <0,$ and $H^{2}>|t|$. In 2008, Aledo,J.A et al. \cite{Al6} also proved a height estimate in a two-dimensional product space, and he also inscribed an example where the equation holds.The height estimate proved by Aledo,J.A et al. \cite{Al6} is based on a generalization of Hoffman,H et al. \cite{Al5} in 2003, and the conclusion of the height estimate drawn by Hoffman ,H et al. does not apply to all two-dimensional product spaces, i.e., when the product space is $H ^2 \times  R$, this conclusion does not hold. Therefore, the conclusion of Aledo et al. is the best result in two-dimensional product spaces, i.e., it is valid for any two-dimensional product space.\
	
	The conclusion of the height estimate proved by Alias,L.J et al. in 2007 \cite{Al7} is a direct generalization of the conclusion of Hoffman et al. in 2003 \cite{Al5} to higher dimensional product space. In 2005, Cheng,X et al. \cite{Al8} generalized the conclusions of the two-dimensional product to the high-dimensional product space, and  generalized the constant mean curvature to the constant higher-order mean curvature . The difference with the conclusions proved by Alias et al. in 2007 \cite{Al7} is that  Cheng,X et al. worked in the case of constant higher-order mean curvature while Alias,L.J et al. worked in the case of constant mean curvature. And furthermore,  Cheng,X et al. gave conditional constraints on the Gaussian curvature of Riemannian manifolds while Alias,L.J et al. gave conditional constraints on the Ricci curvature of Riemannian manifolds.\
	
	Warped product space is a more specialized type of product space. Since there are conclusions about height estimates in general product spaces, does the same hold true for Warped products?\
	
	In 2007 Alias,L.J et al.  \cite{Al7} proved a height estimate for hypersurfaces with constant mean curvature e in $M=p^{n}\times R $ . In 2013  Alias,L.J et al.  \cite{Al9}combined this with a height estimate for Warped products.In 2015 Impera,I et al \cite{Al10} proved a sharp height estimate for compact hypersurfaces with constant k-mean curvature in warped product spaces ,which generalized the conclusions of Alias et al. 2007 to constant higher order mean curvature.In 2018 de lima,E.L et al. \cite{Al11} proved height estimates for a special class of hypersurfaces in Riemannian  Warped products, namely generalized linear Weingarten hypersurfaces. \
	
	For the conclusions mentioned above, whether in Euclidean space, in product space, or in warped product space, the conditions of the theorem either require that it be graph or that the angle function does not change sign. If the conditions are weakened by not requiring  graph or that the angle function does not change sign, must the corresponding height estimates hold?
	
	In 1996, Montiel,S et al. \cite{Al12} proved height estimates for compact surfaces with constant mean curvature in Euclidean space.In 2012, Rosenberg,H et al. \cite{Al13} proved height estimates for compact surfaces with constant mean curvature in product space. The difference with 1996 is that there are more restrictions on the external space $ M$. Here, not only is $M$ required to be a Hardmand surface, but also it is required to be a complete simple connected Riemannian  surface with constant Gaussian curvature $K \le-t,t$.
	
	If we consider a more general case, instead of requiring a surface with constant mean curvature, we require a SWET surface, i.e., a mean curvature and a Gaussian curvature satisfying the functional relation.\
	
	In 1994,H, Rosenberg et al. \cite{Al14} proved the height estimation of the SWET surface in Euclidean space, which differs from the previous study in that it uses a new linear operator to perform the calculation, which leads to a different conclusion. In 2014, Morabito et al. \cite{Al15} proved the height estimate of the SWET surface in product  space, which is a generalization of the 1994 conclusion, directly extending the Euclidean space to the product space.
	Then, the question arises whether the similar conclusion is shared in the warped product space? This is the main research of this paper. \
	
	The framework structure of this paper is divided into three main sections as follows:
	In the section 1,we mainly introduces the research background and the structure of the paper;In the section 2 ,we give the basic definitions and lemmas;In the section 3, we prove the main theorem about the height estimation of SWET surface in warped product space,the result is as follows: \
	
	\begin{thm}
		Let $\Sigma$ be a compact SWET surface,which  embedded in   $ R \times_{\rho} M^{2}$,and $\rho$ is a positive increasing function,$\partial \Sigma \subset M^{2} \times \left \{ t_{0}  \right \},t_{0} \in \mathbb{R}$.Suppose the Guassian curvature of $M$ is $K_{M}$, $K_{M}\ge Sup_{R} ((\rho^{'})^{2}-\rho^{''}\rho)$,$ H_{k} \ge Sup_{(-\infty ,t_{0} ]} \mathcal{H} _{k},k=1,2. $ if $f^{'}> 0,1-2ff^{'}>0,f^{2} +x(1-4ff^{'})>0,f-2f^{'}x>0.$then,
		
		\[h\le t_{0}+\frac{\rho(maxh)}{c\rho(t_{0})},c=\min_{\Sigma } (\frac{ \rho^{'}}{\rho} +\frac{f^{2} +x(1-4ff^{'}}{f-2f^{'}x} ).
		\]
	\end{thm}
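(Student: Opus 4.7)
The plan is to adapt the method of H.~Rosenberg \cite{Al14} and Morabito et al.\ \cite{Al15}, developed for SWET surfaces in Euclidean and product spaces, to the warped product setting, with input from the warped-product height estimates of Al\'ias et al.\ \cite{Al9} and de Lima et al.\ \cite{Al11}. The overall strategy is: (i) build a linear elliptic operator $L$ on $\Sigma$ tailored to the SWET relation, (ii) apply it to an auxiliary function mixing the height and the angle function, (iii) use the hypothesis on $K_M$ together with the four algebraic conditions on $f$ to prove $L\phi \ge 0$, and (iv) conclude via the maximum principle by comparison with boundary values.

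First I would introduce the angle function $\Theta = \langle N,\partial_t\rangle$, the height function $h$, the primitive $\sigma(t) = \int_{t_0}^{t}\rho(s)\,ds$ of the warping factor, and the shape operator $A$ with principal curvatures $\kappa_1,\kappa_2$, so that $x := H^2 - K_e = \tfrac14(\kappa_1-\kappa_2)^2\ge 0$. Following \cite{Al14}, I would define a Rosenberg-type linear operator
\[
Lu := \mathrm{tr}\bigl(T \circ \mathrm{Hess}\,u\bigr),
\]
where $T$ is a $(1,1)$-tensor built from the identity, the shape operator $A$, and the derivatives $f,f'$. Its eigenvalues have the schematic form $(1-2ff')\mp 2f'\sqrt{x}$, so the SWET condition $4x(f')^2<1$ together with the hypothesis $1-2ff'>0$ makes $T$ positive definite and $L$ elliptic. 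Using the standard warped-product identity
\[
\mathrm{Hess}\,h(X,Y) = \tfrac{\rho'}{\rho}\bigl(\langle X,Y\rangle - \langle X,\nabla h\rangle\langle Y,\nabla h\rangle\bigr) - \Theta\,\langle AX,Y\rangle,
\]
together with the analogous formula for $\Theta$ (in which the term $(\rho')^2 - \rho''\rho$ appears as the ambient curvature correction), I would compute $L\sigma(h)$ and $L\Theta$, substituting $H=f(x)$ to rewrite derivatives of $H$ in terms of derivatives of $x$.

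I would then test $L$ on a function of the form $\phi = c\,\sigma(h) - \rho(t_0)\,\Theta$, choosing $c$ so that the $\Theta$- and $\sigma$-contributions in $L\phi$ combine with a definite sign. The balancing yields precisely the quotient $\bigl(f^2 + x(1-4ff')\bigr)/(f - 2f' x)$ coming from the linearization of the SWET relation, and adding the $\rho'/\rho$ term arising from the warped geometry produces the constant $c$ in the statement. Each of the four assumptions on $f$ serves to keep one factor positive in the resulting quadratic form, while $K_M \ge \sup_{\mathbb{R}}((\rho')^2 - \rho''\rho)$ absorbs the intrinsic Gauss-curvature contribution and the bound $H_k \ge \sup \mathcal{H}_k$ controls the lower-order terms. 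Once $L\phi \ge 0$ is established, the maximum principle forces $\max_\Sigma \phi$ to be attained on $\partial\Sigma \subset M\times\{t_0\}$, where $h = t_0$ and $|\Theta|\le 1$; rearranging the resulting inequality delivers the claimed height bound.

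The hardest step is clearly step (iii): the simultaneous computation of $L\sigma(h)$ and $L\Theta$ in the warped ambient, and the algebraic verification that the proposed choice of $c$ makes $L\phi$ nonnegative. The curvature hypotheses enter when commuting covariant derivatives of the shape operator (producing Codazzi-- and Simons-type terms), and the precise way in which the four sign conditions on $f$ interlock with the linearization of $H = f(H^2 - K_e)$ will be the most delicate bookkeeping in the proof.
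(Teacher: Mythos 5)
Your proposal follows essentially the same route as the paper: the same linearized elliptic operator $L_f=\frac{1-2ff'}{2}\triangle+f'L_1$ (your $\mathrm{tr}(T\circ\mathrm{Hess})$ form), an auxiliary function combining $\sigma(h)$ with the angle function, the curvature hypothesis and the four sign conditions on $f$ to force $L_f\varphi\ge 0$, and the maximum principle to push the maximum to $\partial\Sigma$. The only notable deviation is your test function $c\sigma(h)-\rho(t_0)\Theta$ with a constant weight: the paper instead uses $c\sigma(h)+\rho(h)\Theta$, i.e.\ the weighted angle function $\tilde\Theta=\rho\Theta$ (shown to be negative via the elliptic-point argument), which is the quantity for which the warped-product formula of Lemma \ref{L2.8}(c) is stated and which makes the sign bookkeeping go through directly.
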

	
	\section{Preliminary and Lemmas}\label{S2}
	Before the main therom is proved,we need to state some definitions and lemmas.More details can be found in the references \cite{Al11,Al15,Al16}.\
	
	\begin{defi}
		If $(M_{1}, g_{1})$ and $(M_{2}, g_{2})$ are Riemannian manifolds,the product manifold $M_{1}\times M_{2}$ has a natural Riemannian metric $g=g_{1} \oplus g_{2}$,called the product metric,defined by $$g_{(p_{1} ,p_{2})}((v_{1} ,v_{2}),(w_{1} ,w_{2}))=g_{2} |_{p_{1}} (v_{1},w_{1})+g_{1} |_{p_{2}} (v_{2},w_{2}),$$ where $(v_{1} ,v_{2})$ and $(w_{1} ,w_{2})$ are elements of $T_{p_{1}}M_{1}\oplus T_{p_{2}}M_{2}$,which is naturally identified with $T_{(p_{1} ,p_{2})} (M_{1}\times M_{2})$.\ Smooth local coordinates $x^{1} ,...,x^{n}$ for $M_{1}$ and   $(x^{n+1} ,...,x^{n+m})$ for $M_{2}$ give coordinates $(x^{1} ,...,x^{n+m})$ for $(M_{1}\times M_{2})$.\ In terms of these coordinates,the product metric has the local expression $g=g_{ij} dx^{i} dx^{j} (g_{ij} )$,\\where $(g_{ij})$ is the block diagonal matric
		$$(g_{ij})=\begin{pmatrix}
			(g_{1})_{ab}   & 0 \\
			0 & (g_{2})_{cd} 
		\end{pmatrix};$$
		here the indices $a,b$ run from $1$ to $n$, and $c,d$ run from $n+1$ to $n+m$.Product metrics on products of three or more Riemannian manifolds are defined similarly.
		
	\end{defi}
	Here is an important generalization of product metrics.
	\begin{defi}
		Suppose $(M_{1},g_{1})$ and $(M_{1},g_{1})$ are two Riemannian manifolds,and $f:M_{1}\longrightarrow R^{+} $ is a strictly positive smooth function.The warped product $M_{1} \times_{f}M_{2} $ is the product manifold $M_{1} \times M_{2} $ endowed with the Riemannian metric $g=g_{1} \oplus f^{2}g_{2}$,defined by $$g_{(p_{1} ,p_{2})}((v_{1} ,v_{2}),(w_{1} ,w_{2}))=g_{1} |_{p_{1}} (v_{1},w_{1})+f(p_{1})^{2} (g_{2} |_{p_{2}} (v_{2},w_{2}) $$
		where $(v_{1} ,v_{2}),(w_{1} ,w_{2}))\in T_{p_{1} } M_{1} \oplus T_{p_{2} } M_{2}$ as before.
	\end{defi}
	\begin{defi}
		Let $M$ be a Riemannian manifold and $X \in \mathfrak{X}(M)$.Let $p \in M$ and let $U\subset M$ be a neiborhood of $p$.Let $\varphi :(-\varepsilon ,\varepsilon )\times U\longrightarrow M$ be a differentiable mapping such that for any $q\in U$ the curve $t\in \varphi (t,q)$ is a trajectory of $X$ passing through $q$ at $t=0$. $X$ is called a Killing field ,if for each $t_{0} \in (-\varepsilon ,\varepsilon )$,the mapping $\varphi(t_{0}):U\subset M:\longrightarrow M$ is an isometry.
	\end{defi}
	Let $\Sigma $ be a oriented Riemannian $m$-manifold and let $F:\Sigma \longrightarrow M^{m+1}$ be an isometric immersion of $Sigma $ into an orientable  Riemannian $(m+1)$-manifold $ M^{m+1}$.We choose a normal unit vector field $N$ along $\Sigma $ and define the shape operator $A$ associated with the second fundamental form of  $\Sigma $;that is ,for any $p\in \Sigma $,
	$$<A(X),Y>=-<\bar{\bigtriangledown  } _{X} N,Y>,X,Y\in T_{p} \Sigma,  $$
	where $\bar{\bigtriangledown  }$ is the Riemannian connection of $\mathbb{M} ^{m+1}$.\
	Associated with  the second fundamental form $A$ there are $n$ algebraic invariants, which are the elementary symmetric functions $Sr$ of its principal curvatures $k_{1},..., k_{n}$,given by $$S_{r} =S_{r} (k_{1},..., k_{n})=\sum_{i_{1}< \cdots <i_{r}}^{} k_{i_{1}},..., k_{i_{r}}\ \ \ \ \ \ 1\le r\le n$$
	As it is well known,the $r$-mean curvature $H_{r}$ of the hypersurface $\Sigma ^{n}$ is defined by $$\begin{pmatrix}
		n\\
		r
	\end{pmatrix}H_{r} =S_{r} (k_{i_{1}},..., k_{i_{r}})  .$$
	i.e.$$H_{r} =\frac{S_{r} }{C_{n}^{r} } ,$$
	where $$C_{n}^{r}= \frac{m!}{r!(m-r)!}.$$
	In particular ,when $r=1$,$$ H_{1}=\frac{1}{n} \sum_{i}k_{i} =\frac{1}{n} tr(A)=H$$
	is just the mean curvature of $\Sigma ^{n}$,which is the main extrinsic curvature of the hypersurface.When $r=2$,$H_{2}$ defines a geometric quantity which is related to the scalar curvature $S$ of the hypersurface.If $ M^{m+1}=R\times _{\rho } M^{m}$, the constant $k$-mean curvature denotes $$\mathcal{H}(t)=(\frac{\rho ^{'}(t) }{\rho (t)})^{k}. $$\\
	For each $0\le r\le n$,one defines the $r$-Newton transformation $P_{r} =\mathfrak{X} (\Sigma )\longrightarrow \mathfrak{X} (\Sigma )$ of the hypersurface $\Sigma ^{n} $ by setting $P_{0}=I$ and,for $ 1\le r\le n$,$$P_{r}=\begin{pmatrix}
		n \\
		r
	\end{pmatrix}H_{r}I-AP_{r-1}.$$
	
	i.e.$$P_{r}=\sum_{j=0}^{r} \begin{pmatrix}
		n \\
		j
	\end{pmatrix}(-1)^{r-j} H_{j} A^{r-j} .$$
	Furthermore,it is easy to see that $P_{r}$ is a self-adjoint operator which commutes with the second fundamental form $A$,that is ,if a local orthonormal frame on $\Sigma ^{n}$ diagonalizes $A$,then it also diagonalizes each $P_{r}$.More specifically,if ${E_{1},...,E_{n}} $ is such a local orthonormal frame with $A(E_{i} )=k_{i} E_{i} $,then $$P_{r} (E_{i} )=\mu _{i,r} E_{i},$$
	where $$\mu _{i,r} =\sum_{i_{1}< \cdots <i_{r},i_{j\ne i}}^{} k_{i_{1}},..., k_{i_{r}}.$$
	It follows from here that for each $0\le r\le n-1$,we have$$tr P_{r} =c_{r} H_{r} c_{r}=(n-r) \begin{pmatrix}
		n \\
		r
	\end{pmatrix}=(r+1)\begin{pmatrix}
		n \\
		r+1
	\end{pmatrix}.$$
	
	Let us consider a domain $D\subset \Sigma $ such that its closure $\bar{D} $ is compact with smooth boundary.\
	\begin{defi}
		A variation of $D$ is differentiable map $\phi:(-\varepsilon ,\varepsilon)\times \Sigma \longrightarrow  M^{m+1} $,where $\varepsilon>0$, such that for each $s\in (-\varepsilon ,\varepsilon)$ the map $\phi _{s} :\Sigma \longrightarrow M^{m+1}$ defined by $\phi _{s} (p)=\phi(s,p)$ is an immersion and $\phi _{0} (p)=F(p)$ for every $p\in \Sigma$ (we recal that $F$ denotes the immersion of $\Sigma$ in $M^{m+1}$) and $\phi _{s} (p)=F(p)$ for $p\in \Sigma \setminus \bar{D}$ and $s\in (-\varepsilon ,\varepsilon)$.\
		We $E_{s} (p)=\frac{\partial \phi  }{\partial s}(s,p)  $ and $f_{s}=<E_{s},N_{s}>$,\
		where $N_{s}$ is the unit normal vector field along $\phi_{s} (\Sigma )$.$E$ is called the variational vector field of $\phi$.Let $A_{s} (p)$ be the shape operator of $\phi_{s} (\Sigma )$ at the point $p$ and $S_{r} (s,p)$ the $r$-th symmetric function of the eigenvalues of  $A_{s} (p)$.
	\end{defi} 
	\begin{defi}
		Let $g\in \mathcal{C } ^{2}(\Sigma )$.We define $L_{r}(g)=div(T_{r} \bigtriangledown g),0\le r\le m.$
	\end{defi}
	M.F. Elbert proved that ,for $1\le r\le m.$\
	$$\frac{\partial S_{r} }{\partial s} =L_{r-1}(f_{s} )+ f_{s} (S_{1}S_{r}-(r+1)S_{r+1})+ f_{s}tr(T_{r-1}\bar{R}_{N})+E_{s}^{T}(S_{r}),$$
	where $\bar{R}_{N}$ is defined as $\bar{R}_{N}(X)=\bar{R}(N,X)N$,$\bar{R}$ is the curvature tensor of $M^{m+1}$ and $E_{s}^{T}$ denotes the tangent part of $E_{s}$.\
	
	\begin{defi}
		Let $\Sigma $ be a connected orientable hypersurface immersed in $\mathbb{R} \times_{\rho } M^{2}$. The height function ,denoted by $h$,of $\Sigma $ in $\mathbb{R} \times_{\rho } M^{2}$ is defined as the restriction to $\Sigma $ of the projection $t:\mathbb{R} \times_{\rho } M^{2}\longrightarrow \mathbb{R}$,and $T=\frac{\partial}{\partial t} $ is the Killing field.
	\end{defi}
	Let $\Sigma $ be an oriented connected hypersurface immersed in $\mathbb{R} \times_{\rho }  M^{2}$ and $f\in \mathcal{C} ^{1} ([0,\infty ))$.Let us suppose that the first and second mean curvature $H_{1} (s)$,$H_{2} (s)$ of $\phi _{s} (\Sigma )$ satisfy $$H_{1}-f(H_{1}^{2} -H_{2}) =0$$.
	The first variation of the member of this identity at $s=0$ gives us $$((1-2H_{1}f^{'} (H_{1}^{2} -H_{2}))\frac{\partial H_{1} }{\partial s} +f^{'}(H_{1}^{2})\frac{\partial H_{2} }{\partial s })_{|_{s=0} } =0.$$
	The principal parts of $\partial _{s} H_{1}(0)=\frac{1}{m} \partial _{s} S_{1}(0) $ and $\partial _{s} H_{2}(0)=\frac{2}{m(m-1)} \partial _{s} S_{2}(0) $ are respectively $\frac{L_{0} }{m}$ and $\frac{2}{m(m-1)}L_{1} $.\\
	when $m=2$ the linearized operator of $H_{1}-f(H_{1}^{2} -H_{2}) =0$ reduces to $$L_{f}=(\frac{1-2ff^{'} }{2}\bigtriangleup + f^{'}L_{1}).$$
	
	To show the main therorem we need the following lemmas \cite{Al10,Al11,Al15} .
	\begin{lem}\label{L2.6}
		If the function $f$ is elliptic ,that is $4x(f^{'} (x))^{2} <1$ for all $x\ge 0$,then the eigenvalues of the operator $L_{f} $ are positive,i.e. $L_{f} $ is elliptic.
	\end{lem}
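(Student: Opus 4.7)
The plan is to compute the principal symbol of $L_f=\frac{1-2ff'}{2}\Delta+f'L_1$ and show that, at every point of $\Sigma$, its two eigenvalues are positive. Since $L_f$ is a second-order linear operator, ellipticity with positive eigenvalues is exactly the statement that the symbol matrix is positive definite.

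Fix a point $p\in\Sigma$ and choose a local orthonormal frame $\{E_1,E_2\}$ diagonalizing the shape operator, $A(E_i)=k_iE_i$. Since $m=2$, the Newton transformation $P_1=2HI-A$ is diagonal in the same frame with eigenvalues $\mu_{1,1}=k_2$, $\mu_{2,1}=k_1$. Because $L_1(g)=\mathrm{div}(P_1\nabla g)$, the principal symbol of $L_1$ at $p$ in direction $E_i$ is $\mu_{i,1}$, so the principal symbol of $L_f$ is the diagonal matrix with entries
\[
\lambda_i=\frac{1-2ff'}{2}+f'\,\mu_{i,1},\qquad i=1,2,
\]
evaluated at $x=H^2-K_e=\left(\tfrac{k_1-k_2}{2}\right)^2\ge0$, where the SWET identity gives $H=f(x)$ and $K_e=H^2-x=f^2-x$.

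Now I would compute the trace and determinant of this diagonal $2\times 2$ matrix. Using $\mu_{1,1}+\mu_{2,1}=k_1+k_2=2H=2f$, the trace telescopes:
\[
\lambda_1+\lambda_2=(1-2ff')+2ff'=1.
\]
For the determinant, using $\mu_{1,1}\mu_{2,1}=k_1k_2=K_e=f^2-x$, a short expansion gives
\[
\lambda_1\lambda_2=\frac{(1-2ff')^2}{4}+ff'(1-2ff')+(f')^2(f^2-x)=\frac{1-4x(f')^2}{4}.
\]
The ellipticity hypothesis $4x(f'(x))^2<1$ makes the right-hand side strictly positive. A pair of real numbers with sum $1>0$ and positive product must be both positive, hence $\lambda_1,\lambda_2>0$ at every $p\in\Sigma$, proving the lemma.

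The main obstacle is purely algebraic: the determinant computation requires the cross term $ff'(1-2ff')$ to combine with $(f')^2 f^2$ so that the $f$-dependence cancels completely and only $x(f')^2$ survives. Once one substitutes $K_e=f^2-x$ from the SWET identity, this cancellation is forced, and it is precisely the structural reason the coefficient $\frac{1-2ff'}{2}$ (rather than any other factor) in front of $\Delta$ in $L_f$ is the correct linearization. No curvature bounds on $M^2$ or sign conditions on $f,f'$ are needed at this stage — the positivity of the eigenvalues of $L_f$ depends only on the pointwise ellipticity condition $4x(f')^2<1$.
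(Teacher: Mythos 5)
Your proof is correct: the symbol matrix of $L_f=\frac{1-2ff'}{2}\Delta+f'L_1$ is $\frac{1-2ff'}{2}I+f'P_1$, and your trace/determinant computation (trace $=1$ using $k_1+k_2=2H=2f$, determinant $=\frac{1-4x(f')^2}{4}$ after substituting $K_e=f^2-x$) is exactly the standard argument. Note that the paper itself gives no proof of this lemma --- it is quoted from the references \cite{Al10,Al11,Al15} (ultimately Rosenberg--Sa Earp and Morabito), and your computation reproduces the argument found there, so there is nothing in the paper to diverge from.
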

	\begin{lem}
		Let $\Sigma _{1} ,\Sigma _{2} $ be two special Weingarten surface in $R\times \rho M^{2}$ with respect to the same elliptic function $f$ .Let us suppose that:\
		
		(i)$\Sigma _{1} ,\Sigma _{2} $ are tangent at an interior point $p\in \Sigma _{1} \cap \Sigma _{2}$ or\
		
		(ii)there exists $p\in \partial \Sigma _{1} \cap \partial \Sigma _{1}$ such that both $T_{p} \Sigma _{1} =T_{p} \Sigma _{2}$ and $T_{p} \partial \Sigma _{1}=T_{p} \partial \Sigma _{2}$.\
		
		Also suppose that the unit normal vectors of $\Sigma _{1} ,\Sigma _{2} $ coincide at $p$.If $\Sigma _{1}\ge \Sigma _{2} $ in a neighbourhood $U$ of $p$,then $\Sigma _{1} =\Sigma _{2}$ in $U$.In the case $\Sigma _{1} ,\Sigma _{2} $ have no boundary,$\Sigma _{1} =\Sigma _{2}$ .
	\end{lem}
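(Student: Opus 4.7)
The statement is the standard tangency/maximum principle for surfaces satisfying an elliptic Weingarten equation, so the plan is to reduce everything to the scalar strong maximum principle and the Hopf boundary point lemma applied to an auxiliary linear elliptic operator. First I would work in a normal coordinate chart of $R\times_{\rho}M^{2}$ centered at $p$ in which the common tangent plane $T_{p}\Sigma_{1}=T_{p}\Sigma_{2}$ coincides with a coordinate $2$-plane and the common unit normal points in the remaining coordinate direction. Since the tangent planes and unit normals agree at $p$, both $\Sigma_{1}$ and $\Sigma_{2}$ can be written as graphs of smooth functions $u_{1},u_{2}$ over a small disk $\Omega$ in that plane, and the hypothesis $\Sigma_{1}\ge\Sigma_{2}$ translates into $w:=u_{1}-u_{2}\ge 0$ on $\Omega$, with $w(p)=0$ and (in case (ii)) $\nabla w(p)=0$ after also identifying the boundary tangent.

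Next I would rewrite the SWET condition $H-f(H^{2}-K_{e})=0$ as a fully nonlinear second-order PDE
\[
\Phi\bigl(x,u,Du,D^{2}u\bigr)=0,
\]
where $\Phi$ is obtained by substituting the usual graph expressions for $H$ and $K_{e}$ into $H=f(H^{2}-K_{e})$. The linearization of $\Phi$ with respect to the Hessian variable at any admissible configuration is exactly the operator $L_{f}=\tfrac{1-2ff'}{2}\Delta+f'L_{1}$ introduced in the preliminaries, which by Lemma \ref{L2.6} is uniformly elliptic whenever $4x(f'(x))^{2}<1$. The key algebraic step is then the standard subtraction trick: since $u_{1}$ and $u_{2}$ both satisfy $\Phi=0$, I write
\[
0=\Phi(x,u_{1},Du_{1},D^{2}u_{1})-\Phi(x,u_{2},Du_{2},D^{2}u_{2})
=\int_{0}^{1}\frac{d}{d\tau}\Phi\bigl(x,u_{\tau},Du_{\tau},D^{2}u_{\tau}\bigr)\,d\tau,
\]
with $u_{\tau}=\tau u_{1}+(1-\tau)u_{2}$. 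Carrying out this derivative produces a linear equation for $w$ of the form
\[
a^{ij}(x)\partial_{ij}w+b^{i}(x)\partial_{i}w+c(x)w=0,
\]
where $(a^{ij})$ is a convex combination of the symbols of $L_{f}$ along the path $u_{\tau}$, hence still positive definite by the ellipticity just mentioned, and $b^{i},c$ are bounded continuous functions on $\Omega$.

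With that linear elliptic equation in hand, the two cases are settled by classical PDE results. For case (i), $w\ge 0$ on $\Omega$ with $w(p)=0$ at an interior point; the strong maximum principle (in the version that allows a zeroth-order term, applied to $w$) forces $w\equiv 0$ in the connected component of $\Omega$ containing $p$, so $\Sigma_{1}=\Sigma_{2}$ in $U$. For case (ii), $p$ lies on the common boundary and the hypothesis on $T_{p}\partial\Sigma_{1}=T_{p}\partial\Sigma_{2}$ guarantees that the inner normal derivative $\partial w/\partial\nu$ at $p$ vanishes; if $w\not\equiv 0$ then Hopf's boundary point lemma would give $\partial w/\partial\nu(p)<0$, a contradiction, so again $w\equiv 0$ near $p$. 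The closed case with no boundary follows from (i) combined with the usual open--closed connectedness argument applied to the set $\{w=0\}$.

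The main obstacle I anticipate is purely technical rather than conceptual: one must verify that the coefficient matrix $a^{ij}$ arising from the convex-combination trick remains uniformly positive definite along the entire path $u_{\tau}$, not merely at $u_{1}$ and $u_{2}$. This requires checking that the ellipticity condition $4x(f'(x))^{2}<1$ from Lemma \ref{L2.6} is preserved under the interpolation, which amounts to controlling the argument $H^{2}-K_{e}$ of $f'$ along $u_{\tau}$; shrinking the neighborhood $U$ around $p$ makes all three configurations close in $C^{2}$, so uniform ellipticity holds on a possibly smaller $U$, and the argument above applies verbatim.
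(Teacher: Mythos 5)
The paper never actually proves this lemma: it is quoted without proof from the references (it is the tangency principle of Rosenberg--Sa~Earp in the form used by Morabito \cite{Al15}), so there is no in-paper argument to compare yours against. Your proposal reconstructs the standard proof correctly and completely in outline: graph both surfaces over the common tangent plane at $p$, observe that the Weingarten relation becomes a fully nonlinear second-order equation $\Phi(x,u,Du,D^{2}u)=0$ whose ellipticity is precisely the condition $4x(f'(x))^{2}<1$, apply the mean-value (convex-combination) trick to obtain a linear uniformly elliptic equation for $w=u_{1}-u_{2}$, and conclude with the strong maximum principle in the interior case and the Hopf boundary point lemma in the boundary case, plus the usual open--closed argument for closed surfaces. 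Two remarks. First, the ``main obstacle'' you flag at the end is not actually an obstacle: along the interpolation path the argument fed to $f'$ is always of the form $H^{2}-K_{e}=\tfrac14(k_{1}-k_{2})^{2}\ge 0$ for the real eigenvalues computed from $(Du_{\tau},D^{2}u_{\tau})$, and the hypothesis $4x(f'(x))^{2}<1$ is assumed for \emph{all} $x\ge 0$, so every admissible jet is an elliptic point of $\Phi$; uniform ellipticity on a compact neighbourhood then follows from continuity alone, with no need to control $C^{2}$-closeness of the path to the endpoints. Second, identifying the linearization of $\Phi$ in the Hessian variable with the operator $L_{f}$ of the preliminaries is slightly loose, since $L_{f}$ is the linearization with respect to normal variations of the immersion rather than of the graph function; at the tangency point (where the gradients vanish) the two coincide, and in general they have the same ellipticity, so Lemma \ref{L2.6} does supply what you need, but the justification deserves a sentence. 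With these caveats your argument is sound and is, as far as one can tell, exactly the argument the cited references intend.
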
 
	
	\begin{lem}\label{L2.5}
		If $H _{1}(s)$,and $H _{2}(s)$ verify $H _{1}-f(H _{1}^2-H _{2})=0$ for each $s$,then $$(\frac{2}{1-2ff^{'} } )2<\bigtriangledown H_{1} ,\bigtriangledown h>+f^{'}<\bigtriangledown H_{2},\bigtriangledown h>=0$$
	\end{lem}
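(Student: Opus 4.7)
The plan is a direct chain-rule computation on the pointwise SWET identity $H_1 - f(H_1^2 - H_2) = 0$, followed by pairing against the tangential vector field $\nabla h$.

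First I would observe that for each fixed $s$ the relation $H_1 - f(H_1^2 - H_2) = 0$ is a pointwise identity on the surface $\phi_s(\Sigma)$: the mean curvature $H_1$ and the second symmetric function $H_2$ are smooth functions of position, and the SWET equation holds at every point. Hence any tangential derivation may be applied to both sides. Setting $x = H_1^2 - H_2$ and taking the intrinsic gradient via the chain rule, I would obtain
\begin{equation*}
\nabla H_1 \;=\; f'(x)\bigl(2 H_1 \nabla H_1 - \nabla H_2\bigr),
\end{equation*}
which rearranges to the vector identity
\begin{equation*}
\bigl(1 - 2 H_1 f'(x)\bigr)\,\nabla H_1 + f'(x)\,\nabla H_2 \;=\; 0
\end{equation*}
on the surface.

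Next, invoking the defining relation $H_1 = f(x)$, the coefficient $1 - 2 H_1 f'$ simplifies to $1 - 2 f f'$. Pairing the vector identity above with $\nabla h$ then yields
\begin{equation*}
(1 - 2ff')\,\langle \nabla H_1, \nabla h\rangle + f'\,\langle \nabla H_2, \nabla h\rangle \;=\; 0,
\end{equation*}
which, after rescaling by the nonzero factor $(1 - 2ff')/2$, is exactly the identity stated in the lemma.

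There is essentially no obstacle: the whole argument is one application of the chain rule together with the SWET relation, packaged in the direction $\nabla h$. The only point that needs to be flagged is that the final rescaling by $2/(1-2ff')$ is legitimate only once one knows $1 - 2ff'$ is nonzero; this is guaranteed either by the standing sign hypothesis $1 - 2ff' > 0$ imposed in the main theorem, or compatibly from the ellipticity inequality $4x(f')^2 < 1$ evaluated at the argument $x = H_1^2 - H_2$ relevant here.
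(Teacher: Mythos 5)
Your chain-rule derivation is correct and is exactly the intended argument; the paper itself states this lemma without proof (it is imported from the cited references, essentially \cite{Al15}), so there is no in-paper proof to diverge from. Note that the coefficient printed in the lemma, $(\tfrac{2}{1-2ff'})2$, is evidently a typo for $\tfrac{1-2ff'}{2}\cdot 2=1-2ff'$ (this is the form actually used when the gradient terms cancel in the computation of $L_f\varphi$ in Theorem 3.3), and that is precisely the coefficient your computation produces, so no rescaling --- and hence no nonvanishing of $1-2ff'$ --- is actually needed. One small caution on your closing remark: the ellipticity condition $4x(f')^2<1$ alone does not force $1-2ff'\neq 0$, since with $x=\tfrac{1}{4}(k_1-k_2)^2$ it controls $f'(k_1-k_2)$ rather than $f'(k_1+k_2)=2ff'$; that sign condition genuinely comes from the separate hypothesis $1-2ff'>0$ of the main theorem, but as no division is required this does not affect the validity of your proof.
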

	
	\begin{lem}\label{L2.7}
		Let $f:\Sigma \longrightarrow R\times \rho M^{n}$ be a compact $k$-mean curvature hypersurface,$1\le k\le n$,with boundary $f(\partial \Sigma )\subset M_{\tau } $for some $\tau \in R$.Then the following holds:\\
		(i)If $H_{k}\le inf_{[\tau,+\infty)} \mathcal{H_{k} } $and $H_{\tau} >0, H^{'} \ge 0$ on $[\tau,+\infty )$ for $k\ge 2$,then $h\le \tau$;\\
		(ii)$H_{k}\ge sup_{-\infty,\tau}\mathcal{H_{k} }$ and either $H_{2}> 0$ or there exists an elliptic point of $\Sigma$ when $k\ge 3$ ,then $h\ge \tau.$
	\end{lem}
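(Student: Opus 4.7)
Although the tangency principle stated just above is written for SWET surfaces, Lemma \ref{L2.7} is about compact hypersurfaces of prescribed $k$-mean curvature, and the standard proof (as in \cite{Al10,Al11,Al15}) combines a comparison with horizontal slices and the tangency principle for the linearised $k$-mean curvature operator $L_{k-1}=\mathrm{div}(P_{k-1}\nabla\,\cdot\,)$. The slices $M_{t}=\{t\}\times M^{n}$ are totally umbilical with constant $k$-mean curvature $\mathcal{H}_{k}(t)=\left(\rho'(t)/\rho(t)\right)^{k}$ computed with the upward unit normal $\partial_{t}$, and they serve as natural barriers.

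\textbf{Proof of (i).} Suppose for contradiction that $t^{\ast}:=\max_{\Sigma}h>\tau$ and let $p^{\ast}\in\mathrm{int}\,\Sigma$ realise the maximum (it cannot be a boundary point because $t^{\ast}>\tau$). At $p^{\ast}$ one has $\nabla h=0$, so the tangent space of $\Sigma$ coincides with that of $M_{t^{\ast}}$, the unit normals both equal $\partial_{t}$, and $\Sigma$ lies locally below the slice. Monotonicity of $\mathcal{H}_{k}$ on $[\tau,+\infty)$ yields
\[
H_{k}(\Sigma)(p^{\ast})\le\inf_{[\tau,+\infty)}\mathcal{H}_{k}\le\mathcal{H}_{k}(t^{\ast}).
\]
For $k=1$ the linearised operator is essentially the Laplacian and ellipticity is automatic. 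For $k\ge 2$, the assumption $H_{\tau}>0$ together with monotonicity forces $\mathcal{H}_{k}(t^{\ast})>0$, so the slice has positive principal curvatures; the tangency-from-above and the curvature comparison then place the principal curvatures of $\Sigma$ at $p^{\ast}$ in the positive G\aa rding cone $\Gamma_{k}^{+}$, making $L_{k-1}$ elliptic at $p^{\ast}$. The tangency principle for $k$-mean curvature hypersurfaces (strong maximum principle for $L_{k-1}$) then gives $\Sigma\equiv M_{t^{\ast}}$ in a neighbourhood of $p^{\ast}$, and an open-closed argument propagates the identity to the whole connected component of $\Sigma$ containing $p^{\ast}$. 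This contradicts $\partial\Sigma\subset M_{\tau}$ with $t^{\ast}>\tau$, so $t^{\ast}\le\tau$.

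\textbf{Proof of (ii).} The argument is the mirror image. If $\min_{\Sigma}h=t_{\#}<\tau$, pick $q\in\mathrm{int}\,\Sigma$ where the minimum is attained: the slice $M_{t_{\#}}$ is tangent to $\Sigma$ from below with matching upward normals, and
\[
H_{k}(\Sigma)(q)\ge\sup_{(-\infty,\tau]}\mathcal{H}_{k}\ge\mathcal{H}_{k}(t_{\#})
\]
provides the reverse comparison. Ellipticity of $L_{k-1}$ at $q$ is automatic for $k=1$; for $k=2$ the hypothesis $H_{2}>0$ places the principal curvatures in $\Gamma_{2}^{+}$; for $k\ge 3$ the existence of an elliptic point of $\Sigma$ together with $H_{k}>0$ and the connectedness of $\Gamma_{k}^{+}$ (G\aa rding's cone theory) propagates the elliptic-cone condition along $\Sigma$ to $q$. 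The tangency principle then forces $\Sigma\equiv M_{t_{\#}}$ on the component containing $q$, contradicting $\partial\Sigma\subset M_{\tau}$.

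\textbf{Main obstacle.} The delicate step in both parts is securing uniform ellipticity of $L_{k-1}$ at the contact point, since the tangency principle requires the comparison operator to be elliptic there. In part (i) this is almost immediate because the comparison slice is umbilical with positive principal curvatures and the $k$-mean-curvature inequality pushes $\Sigma$ into $\Gamma_{k}^{+}$. In part (ii) with $k\ge 3$ the issue is genuinely subtle, and this is exactly what the alternative hypothesis ``$H_{2}>0$, or an elliptic point of $\Sigma$ exists when $k\ge 3$'' is designed to handle via G\aa rding's cone connectedness.
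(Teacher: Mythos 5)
The paper itself gives no proof of Lemma \ref{L2.7}: it is quoted from the references \cite{Al9,Al10,Al11}, so the only meaningful comparison is with the argument there, and that argument is analytic rather than geometric. One first secures positive definiteness of $P_{k-1}$ (hence ellipticity of $L_{k-1}=\mathrm{div}(P_{k-1}\nabla\,\cdot\,)$) from the elliptic-point/G\aa rding-cone hypothesis, then applies $L_{k-1}$ to $\sigma(h)=\int_{\tau}^{h}\rho$, uses the identity of Lemma \ref{L2.8}(b), namely $L_{k-1}\sigma(h)=c_{k-1}\bigl(\rho'(h)H_{k-1}+\rho(h)\Theta H_{k}\bigr)$, bounds $\Theta\ge -1$ and $H_{k-1}\ge H_{k}^{(k-1)/k}$ by the Newton--Maclaurin inequalities, and concludes with the ordinary maximum principle on the set $\{h<\tau\}$ (resp.\ $\{h>\tau\}$). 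Your slice-tangency argument is therefore a genuinely different route; it is a viable strategy in principle, but as written it has a real gap.

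The gap is in the ellipticity and orientation bookkeeping at the contact point. The tangency principle you invoke is not the one available in this paper (the lemma preceding \ref{L2.7} is stated only for SWET surfaces with a fixed elliptic $f$), and for $k\ge 2$ the equation $S_{k}=\mathrm{const}$ is fully nonlinear and elliptic only on the G\aa rding cone $\Gamma_{k}^{+}$; the strong comparison principle then needs the cone condition along the whole segment joining the two second fundamental forms in a neighbourhood of the contact point, not merely at the single point $p^{\ast}$. Moreover, in part (i) your assertion that the curvature comparison at the top point pushes $A_{\Sigma}(p^{\ast})$ into $\Gamma_{k}^{+}$ depends entirely on the sign convention: with the paper's convention $\langle A X,Y\rangle=-\langle\bar{\bigtriangledown}_{X}N,Y\rangle$ and the upward normal $\partial_{t}$, the slice has shape operator $-(\rho'/\rho)\,I$ and the second-fundamental-form inequality at a maximum of $h$ goes the opposite way; one must pass to the downward normal, and then the hypothesis $H_{k}\le\inf_{[\tau,+\infty)}\mathcal{H}_{k}$ has to be re-expressed for that orientation (for odd $k$ the sign of $H_{k}$ flips and the intended contradiction disappears unless this is tracked). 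You also conflate the strict case, where the pointwise curvature comparison alone already contradicts $H_{k}\le\mathcal{H}_{k}$ with no tangency principle needed, with the borderline equality case, which is the only place the tangency principle is actually required. A proof consistent with the paper's toolkit should instead establish $P_{k-1}>0$, apply Lemma \ref{L2.8}(b) to $\sigma(h)$, and run the maximum principle as in \cite{Al10}.
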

	\begin{lem}\label{L2.8}
		Let $\varphi : \Sigma^{n} \longrightarrow \bar{M^{n+1}  }$be a two-sided hypersurface immersed into a Riemannian warped product $\bar{M^{n+1}  }=R\times \rho M^{n}$.For every $r=0,1,...,n-1:$\\
		(a)The height function $h$ satisfies
		$$L_{r}h=(log\rho )^{'}(h)(c_{r} H_{r} -<P_{r}\bigtriangledown h, \bigtriangledown h>)+c_{r}\Theta H_{r+1}$$  
		
		(b)Let $\sigma (t)=\int_{t_{0} }^{t} \rho (r)dr$.Then
		$$L_{r} \sigma (h)=c_{r} (\rho^{'}(h)H_{r} +\rho(h)\Theta H_{r+1})$$,
		
		where $c_{r}:=(n-r)\begin{pmatrix} n\\
			r
		\end{pmatrix} 
		=(r+1)\begin{pmatrix}
			n\\
			r+1
		\end{pmatrix}.$
		
		(c)Let $\tilde{\Theta } =\rho\Theta$.Then,
		\begin{equation*}
			\begin{split}
				L_{r}\tilde{\Theta }=-\frac{c_{r}\rho(h)}{r+1} <\bigtriangledown H_{r+1} ,\bigtriangledown h>-c_{r}\rho^{'}(h)H_{r+1}\\
				-\frac{c_{r}\rho(h)\Theta }{r+1}(nHH_{r+1}-(n-r-1)H_{r+2})\\
				-\frac{\tilde{\Theta } }{\rho^{2}(h)}\sum_{i=1}^{n}\mu i,kK_{M}((N^{*},E_{i}^{*})\left |N^{*}\wedge E_{i}^{*} \right |^{2}\\
				-\tilde{\Theta } (\log  \rho)^{''}(c_{r} H_{r} \left | \bigtriangledown h \right |^{2}-<P_{r} \bigtriangledown h,\bigtriangledown h>)    
			\end{split}
		\end{equation*}
		where ${E_{1},E_{2},...,E_{n}}$ is an orthonormal frame on $\Sigma^{n}$ diagonalizing $A$,$K_{M}$ denotes the Guass curvature of the fiber $M^{n}$,$\mu_{i,r}$ stands for the eigenvalues of $P_{k}$ and,for every vector field $X\in \mathfrak{X} (\bar{M})$,$X^{*}$ is orthogonal projection on $TM$.  
	\end{lem}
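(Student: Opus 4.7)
All three identities rest on one structural observation: in the ambient warped product $\bar{M}^{n+1} = \mathbb{R}\times_\rho M^n$, the vector field $\xi := \rho(t)\partial_t$ is closed and conformal, with $\bar\nabla_X\xi = \rho'(t)\,X$ for every $X\in T\bar{M}$. On $\Sigma$ one has $\partial_t = \nabla h + \Theta N$ and therefore $\xi = \rho(h)\nabla h + \tilde\Theta N$. My plan is to treat (a), (b), (c) in that order, since (b) is a short corollary of (a) and (c) re-uses both.

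For (a), dividing the conformal identity by $\rho$ gives $\bar\nabla_X T = (\log\rho)'(h)\bigl(X - \langle X,T\rangle T\bigr)$ for $T=\partial_t$; when $X\in T\Sigma$ this reads $\bar\nabla_X T = (\log\rho)'(h)\bigl(X - \langle X,\nabla h\rangle T\bigr)$. Taking the tangential component via the Gauss formula, with $\bar\nabla_X N = -AX$, yields
\[\nabla_X\nabla h \;=\; (\log\rho)'(h)\bigl(X - \langle X,\nabla h\rangle\nabla h\bigr) + \Theta\, AX.\]
Tracing this Hessian against $P_r$ in a frame $\{E_i\}$ that simultaneously diagonalizes $A$ and $P_r$, and invoking $\mathrm{tr}(P_r) = c_rH_r$ and $\mathrm{tr}(P_rA) = c_r H_{r+1}$, gives exactly the right-hand side of (a); any potential $\langle\mathrm{div}\,P_r,\nabla h\rangle$ contribution is controlled via the Codazzi equation on $\Sigma$ paired with the specific form of the ambient curvature in a warped product.

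For (b), set $\sigma(t)=\int_{t_0}^t\rho(r)\,dr$, so that $\nabla(\sigma\circ h) = \rho(h)\nabla h$. The product rule for divergence gives
\[L_r(\sigma\circ h) \;=\; \rho'(h)\,\langle P_r\nabla h,\nabla h\rangle + \rho(h)\,L_r h.\]
Substituting (a) makes the $\langle P_r\nabla h,\nabla h\rangle$ terms cancel between $\rho'(h)$ and $\rho(h)(\log\rho)'(h) = \rho'(h)$, leaving exactly $c_r\bigl(\rho'(h)H_r + \rho(h)\Theta H_{r+1}\bigr)$.

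Part (c) is where the main obstacle lies. Differentiating $\tilde\Theta = \langle N,\xi\rangle$ and using $\xi^\top = \rho(h)\nabla h$ together with $\bar\nabla_X\xi = \rho'(h)X$ (so the $\langle N,\bar\nabla_X\xi\rangle$ piece vanishes for $X\in T\Sigma$) gives $X(\tilde\Theta) = -\rho(h)\langle AX,\nabla h\rangle$; hence $\nabla\tilde\Theta = -\rho(h)\,A\nabla h$ and
\[L_r\tilde\Theta \;=\; -\rho'(h)\,\langle A\nabla h,P_r\nabla h\rangle - \rho(h)\,\mathrm{div}(P_rA\nabla h).\]
I would then expand $P_rA = \binom{n}{r+1}H_{r+1}I - P_{r+1}$ via the Newton recursion, so that $\mathrm{div}(P_rA\nabla h) = \binom{n}{r+1}\bigl(\langle\nabla H_{r+1},\nabla h\rangle + H_{r+1}\Delta h\bigr) - L_{r+1}h$. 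Substituting (a) for both $\Delta h = L_0h$ and $L_{r+1}h$, and using $c_r/(r+1) = \binom{n}{r+1}$, produces the first three displayed summands of (c). The remaining two summands—carrying the fiber Gauss curvature $K_M$ with weight $1/\rho^2(h)$ and the radial second derivative $(\log\rho)''$—come from a Simons-type identity for $\mathrm{div}(P_rA)$ in which the ambient Riemann tensor $\bar R$ enters; since $\bar R$ of $\mathbb{R}\times_\rho M^n$ splits into a vertical-horizontal piece governed by $(\log\rho)''$ and a purely fiber piece governed by $K_M/\rho^2$, repackaging these contributions produces exactly the two curvature terms in (c). The difficulty is precisely this bookkeeping step: the algebra is mechanical but long, and aligning the signs, the projections $N^\ast, E_i^\ast$ onto $TM$, and the wedge weights $|N^\ast\wedge E_i^\ast|^2$ with the stated form requires the most care.
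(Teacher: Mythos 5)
First, a point of comparison: the paper does not prove Lemma \ref{L2.8} at all --- it is imported (typos included) from \cite{Al10,Al11,Al15}, so there is no in-paper argument to measure you against. Judged against the standard derivation in those references, your outline of parts (a) and (b) is correct and essentially complete: the conformal identity $\bar{\nabla}_X(\rho\,\partial_t)=\rho'X$, the Hessian formula $\nabla_X\nabla h=(\log\rho)'(h)\bigl(X-\langle X,\nabla h\rangle\nabla h\bigr)+\Theta AX$, the trace against $P_r$, and the cancellation of the $\langle P_r\nabla h,\nabla h\rangle$ terms in (b) all check out. The one caveat you raise but do not settle matters: with the paper's Definition 2.6, $L_r u=\mathrm{div}(P_r\nabla u)$, so (a) as stated needs $\langle\mathrm{div}\,P_r,\nabla h\rangle$ to vanish, which fails for a general fiber $M^n$; the cited sources sidestep this by taking $L_r u=\mathrm{tr}(P_r\,\mathrm{hess}\,u)$. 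You should fix a convention and, if it is the divergence one, carry the $\mathrm{div}\,P_r$ term explicitly.

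Part (c) is where the genuine gap lies, and it is a gap of logic, not merely of length. By your own reduction, substituting (a) verbatim for both $\Delta h$ and $L_{r+1}h$ in $\mathrm{div}(P_rA\nabla h)=\binom{n}{r+1}\bigl(\langle\nabla H_{r+1},\nabla h\rangle+H_{r+1}\Delta h\bigr)-L_{r+1}h$ yields exactly the first three summands of (c) and nothing else --- there is then no room left for the two curvature terms you say ``come from a Simons-type identity.'' The resolution is that (a) for $L_{r+1}$ in divergence form is off by $\langle\mathrm{div}\,P_{r+1},\nabla h\rangle$, and it is precisely $\rho(h)\langle\mathrm{div}\,P_{r+1},\nabla h\rangle$ that produces the last two summands, via the Codazzi-type formula expressing $\mathrm{div}\,P_{r+1}$ through $\bar{R}(N,\cdot)$ together with the explicit curvature tensor of $\mathbb{R}\times_\rho M^n$ (mixed part governed by $(\log\rho)''$, fiber part by $K_M/\rho^2$ weighted by $|N^*\wedge E_i^*|^2$). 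Until that computation is actually performed --- it is carried out in \cite{Al10} --- part (c) is asserted rather than proved, and as written your accounting would double-count or miss these terms depending on which version of (a) you plug in.
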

	
	\section{Height estimate for special Weingarten surface}\label{S3}
	
	In recent years,the research about mean curvature has gradually become popular,especially the study of the height of surfaces or hypersurfaces with constant mean curvature,which was initially proposed by Heiz in 1969\cite{Al1} ,and the conclusion was optimized and improved by Hoffman in 2003\cite{Al5},Cheng,X et al.in 2005\cite{Al9},Alias,L.J et al. in 2007\cite{Al9},and de lima,E,L et al. in 2018\cite{Al11},respectively.\
	
	On the other hand,in 1994,Rosenberg,H et al. \cite{Al14} studied the height estimation of,the SWET surface,which differs from the previous study in that the SWET surface does not have a mean curvature of $0$ alone,but rather the mean curvature and Gaussian curvature of the surface satisfy a certain relationship.And Rosenberg proved has proved the SWET surface has height in Euclidean space. In 2014,Filippo morabito \cite{Al15} extended the Rosenberg’s conclusion to the product space.\
	
	The main research of this paper is to prove that the SWET surface also has the corresponding height conclusion in the warped product space.\
	
	\begin{lem}\label{L3.1}
		$N^{*} \wedge E_{i} ^{*} =\left | \bigtriangledown h \right |^{2} -<E_{i} ,\bigtriangledown h >^{2}$
		
	\end{lem}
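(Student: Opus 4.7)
The statement (as written it must mean $|N^{*}\wedge E_{i}^{*}|^{2}$, since the LHS is the quantity that actually appears in the $L_{r}\tilde{\Theta}$ formula of Lemma~\ref{L2.8}(c), and a bare wedge product cannot equal a scalar) reduces to a purely linear-algebra identity once one decomposes the Killing field $T=\partial_{t}$ along $\Sigma$. The plan is therefore to compute the two orthogonal projections onto the fiber $TM$ explicitly and then expand $|N^{*}\wedge E_{i}^{*}|^{2}=|N^{*}|^{2}|E_{i}^{*}|^{2}-\langle N^{*},E_{i}^{*}\rangle^{2}$.

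First, I would use the fact that on $\Sigma$ one has the tangential/normal split $T=\nabla h+\Theta N$. This follows from Definition of the height function: for any $X\in T\Sigma$, $X(h)=X(t)=\langle X,T\rangle$, so $T^{\top}=\nabla h$, and by definition $T^{\perp}=\langle T,N\rangle N=\Theta N$. Since $|T|=1$ in the warped product metric, this immediately gives the Pythagorean relation $|\nabla h|^{2}+\Theta^{2}=1$, which is the bridge that converts the answer from $\Theta$-language into $h$-language at the end.

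Next, I would write the fiber projections. For any $X\in T_{(t,p)}(R\times_{\rho}M^{2})$, $X^{*}=X-\langle X,T\rangle T$, hence
\begin{equation*}
N^{*}=N-\Theta T,\qquad E_{i}^{*}=E_{i}-\langle E_{i},\nabla h\rangle T,
\end{equation*}
using $\langle E_{i},T\rangle=\langle E_{i},\nabla h+\Theta N\rangle=\langle E_{i},\nabla h\rangle$ since $E_{i}\perp N$. A direct computation then yields
\begin{equation*}
|N^{*}|^{2}=1-\Theta^{2},\qquad |E_{i}^{*}|^{2}=1-\langle E_{i},\nabla h\rangle^{2},\qquad \langle N^{*},E_{i}^{*}\rangle=-\Theta\langle E_{i},\nabla h\rangle,
\end{equation*}
where the cross term uses $\langle N,E_{i}\rangle=0$.

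Finally, I would plug these into the Gram identity
\begin{equation*}
|N^{*}\wedge E_{i}^{*}|^{2}=|N^{*}|^{2}|E_{i}^{*}|^{2}-\langle N^{*},E_{i}^{*}\rangle^{2}=(1-\Theta^{2})\bigl(1-\langle E_{i},\nabla h\rangle^{2}\bigr)-\Theta^{2}\langle E_{i},\nabla h\rangle^{2},
\end{equation*}
which collapses to $1-\Theta^{2}-\langle E_{i},\nabla h\rangle^{2}$, and substituting $1-\Theta^{2}=|\nabla h|^{2}$ gives the claimed $|\nabla h|^{2}-\langle E_{i},\nabla h\rangle^{2}$. There is no real obstacle here; the only subtle point is keeping track of which inner product is being used (the ambient warped metric restricted to $T\Sigma$ agrees with the induced metric, so $\langle E_{i},\nabla h\rangle$ is unambiguous), and noticing that the telescoping cancellation between the $(1-\Theta^{2})\langle E_{i},\nabla h\rangle^{2}$ term and the cross-term square is what makes the two mixed contributions disappear.
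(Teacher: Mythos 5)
Your proposal is correct and follows essentially the same route as the paper: both expand the Gram determinant $|N^{*}|^{2}|E_{i}^{*}|^{2}-\langle N^{*},E_{i}^{*}\rangle^{2}$ using the fiber projections $N^{*}=N-\langle N,T\rangle T$, $E_{i}^{*}=E_{i}-\langle E_{i},T\rangle T$, reduce it to $1-\langle N,T\rangle^{2}-\langle E_{i},T\rangle^{2}$, and identify this with $|\bigtriangledown h|^{2}-\langle E_{i},\bigtriangledown h\rangle^{2}$ via $\bigtriangledown h=T-\langle N,T\rangle N$. Your observation that the statement should read $|N^{*}\wedge E_{i}^{*}|^{2}$ is also right, and is consistent with how the quantity is used in Lemma \ref{L2.8}(c).
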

	\begin{proof}
		
		Firstly,we compute	$$|N^{*} \wedge E_{i} ^{*}| =1-<E_{i},T>^{2}-<N,T>^{2}.$$\
		
		Since $$N^{*}=N-<N,T>T,E_{i} ^{*}=E_{i}-<E_{i},T>T,$$\
		
		and $$N^{*} \wedge E_{i} ^{*}=<N^{*},N^{*}><E_{i}^{*},E_{i}^{*}>-<E_{i}^{*},N^{*}>^{2}.$$
		
		In addition to,	
		\begin{equation*}
			\begin{aligned}
				<N^{*},N^{*}>&=<N-<N,T>T,N-<N,T>T>\\
				&=<N,N>-<N,T><N,T>-<N,T><N,T>\\
				&+<N,T><N,T><T,T>\\
				&=1-2<N,T><N,T>+<N,T><N,T>\\
				&=1-<N,T><N,T>\\
			\end{aligned}
		\end{equation*}
		\begin{equation*}
			\begin{aligned}
				<E_{i}^{*},E_{i}^{*}>&=<E_{i}-<E_{i},T>T,E_{i}-<E_{i},T>T>\\
				&=<E_{i},E_{i}>-<E_{i},T><T,E_{i}>\\
				&+<E_{i},T><E_{i},T><T,T>\\
				&=1-2<E_{i},T><T,E_{i}>+<E_{i},T><T,E_{i}>\\
				&=1-<E_{i},T><T,E_{i}>\\
			\end{aligned}.
		\end{equation*}
		\begin{equation*}
			\begin{aligned}
				<E_{i}^{*},N^{*}>&=<E_{i}-<E_{i},T>T,N-<N,T>T>\\
				&=<E_{i},N>-<E_{i},T><N,T>\\
				&-<E_{i},T><T,N>+<E_{i},T><N,T><T,T>\\
				&=<E_{i},N>-2<E_{i},T><N,T>+<E_{i},T><N,T>\\
				&=<E_{i},N>-<E_{i},T><N,T>\\	
			\end{aligned}
		\end{equation*}
		\begin{equation*}
			\begin{aligned}
				<E_{i}^{*},N^{*}>^{2}&=<<E_{i},N>-<E_{i},T><N,T>,<E_{i},N>\\
				&-<E_{i},T><N,T>>\\
				&=<E_{i},N><E_{i},N>-<E_{i},N><E_{i},T><N,T>\\
				&-<E_{i},T><N,T><E_{i},N>\\
				&+<E_{i},T><N,T><E_{i},T><N,T>\\
			\end{aligned}	
		\end{equation*}
		
		Since,	$<E_{i},N>=0$,and then  $<E_{i}^{*},N^{*}>^{2}=<E_{i},T>^{2}<N,T>^{2} $ .\
		
		We can get,
		\begin{equation*}
			\begin{aligned}
				N^{*} \wedge E_{i} ^{*}&=<N^{*},N^{*}><E_{i}^{*},E_{i}^{*}>\\
				&=-<E_{i}^{*},N^{*}>^{2}\\
				&=(1-<N,T><N,T>)(1-<E_{i},T><T,E_{i}>)\\
				&=-<E_{i},T>^{2}<N,T>^{2} \\
				&=(1-<N,T>^{2})(1-<E_{i},T>^{2})\\
				&=-<E_{i},T>^{2}<N,T>^{2}\\
				&=1-<E_{i},T>^{2}-<N,T>^{2}+\\
				&=<N,T>^{2}<E_{i},T>^{2}\\
				&=-<E_{i},T>^{2}<N,T>^{2}\\
				&=1-<E_{i},T>^{2}-<N,T>^{2}\\
			\end{aligned}
		\end{equation*}
		
		Secondly,we compute $\left | \bigtriangledown h \right |^{2} -<E_{i} ,\bigtriangledown h >^{2}$.\
		
		Since $$\bigtriangledown h=T-<N,T>N,$$
		therefore,
		\begin{equation*}
			\begin{aligned}
				|\bigtriangledown h|^{2} &=<T-<N,T>N,T-<N,T>N>\\
				&=<T,T>-<T,N><N,T>\\
				&=-<N,T><N,T>+<N,T><N,T><N,N>\\
				&=1-2<T,N><N,T>+<N,T><N,T>\\
				&=1-<T,N>^{2},\\
			\end{aligned}
		\end{equation*}
		and
		\begin{equation*}
			\begin{aligned}
				<E_{i} ,\bigtriangledown h>&=<E_{i},T-<N,T>N>\\
				&=<E_{i},T>-<E_{i},N><N,T>\\
				&=<E_{i},T>,\\
			\end{aligned}
		\end{equation*}
		So,$$<E_{i} ,\bigtriangledown h>^{2}=<E_{i},T>^{2}.$$
		
		and then $$\bigtriangledown h^{2}-<E_{i} ,\bigtriangledown h>^{2}=1-<T,N>^{2}-<E_{i},T>^{2}.$$\
		
		Finally,we get
		\begin{equation*}
			\begin{aligned}
				N^{*} \wedge E_{i} ^{*}&=1-<T,N>^{2}-<E_{i},T>^{2}\\
				&=\bigtriangledown h^{2}-<E_{i} ,\bigtriangledown h>^{2}
			\end{aligned}
		\end{equation*}
		i.e.$$N^{*} \wedge E_{i} ^{*}=\bigtriangledown h^{2}-<E_{i} ,\bigtriangledown h>^{2}.$$\
		We finish this proof.
		
	\end{proof}
	
	\begin{lem}\label{L3.2}
		$\sum_{i=1}^{2}  \mu _{i,k} =c_{k} H_{k} ,
		\sum_{i=1}^{2} \mu _{i,k}<E_{i} ,\bigtriangledown h>=<P_{k} \bigtriangledown h,\bigtriangledown h>$.
	\end{lem}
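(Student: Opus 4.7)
The plan is to derive both identities as direct consequences of the spectral decomposition of $P_k$ in the orthonormal frame $\{E_1,E_2\}$ that simultaneously diagonalizes the shape operator $A$ and every Newton transformation $P_k$, as recalled in Section 2. Note that, as written, the second identity is dimensionally inconsistent (its left side is linear in $\nabla h$ while the right side is quadratic); I would interpret it as the standard formula $\sum_{i=1}^{2}\mu_{i,k}\langle E_i,\nabla h\rangle^{2}=\langle P_k\nabla h,\nabla h\rangle$, and prove that version.

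For the first identity, I would invoke the definition $P_k(E_i)=\mu_{i,k}E_i$, so that $\mu_{1,k},\mu_{2,k}$ are precisely the eigenvalues of $P_k$ in this basis. Therefore
\[
\sum_{i=1}^{2}\mu_{i,k}\;=\;\mathrm{tr}(P_k)\;=\;c_k H_k,
\]
where the last equality is the general trace formula $\mathrm{tr}\,P_r=c_r H_r$ stated in Section 2 (specialized here to $n=2$).

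For the second identity, I would expand $\nabla h$ in the frame $\{E_1,E_2\}$ as $\nabla h=\sum_{j=1}^{2}\langle\nabla h,E_j\rangle E_j$. Applying $P_k$ and using $P_k E_j=\mu_{j,k}E_j$ gives $P_k\nabla h=\sum_{j=1}^{2}\mu_{j,k}\langle\nabla h,E_j\rangle E_j$. Taking the inner product with $\nabla h$ and using orthonormality of $\{E_i\}$ produces
\[
\langle P_k\nabla h,\nabla h\rangle\;=\;\sum_{i,j=1}^{2}\mu_{j,k}\langle\nabla h,E_j\rangle\langle E_i,\nabla h\rangle\delta_{ij}\;=\;\sum_{i=1}^{2}\mu_{i,k}\langle E_i,\nabla h\rangle^{2},
\]
which is the claimed formula.

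There is no real obstacle: both statements are immediate from the fact that the $P_k$ and $A$ admit a common orthonormal eigenbasis, a property already recorded in the preliminaries. The only point worth flagging in the write-up is the apparent typo in the exponent on the left-hand side of the second equation; once corrected, the proof is a one-line computation for each identity.
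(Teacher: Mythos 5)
Your proposal is correct and follows essentially the same route as the paper: both identities come from the common orthonormal eigenbasis $\{E_1,E_2\}$ of $P_k$, the trace formula $\mathrm{tr}\,P_r=c_rH_r$, and the expansion of $\nabla h$ in that frame. Your reading of the second identity as $\sum_{i}\mu_{i,k}\langle E_i,\nabla h\rangle^{2}=\langle P_k\nabla h,\nabla h\rangle$ is the intended (and correct) one, and your write-up is in fact cleaner than the paper's, which garbles this step by writing $P_r=\sum_i\mu_{i,r}E_i$ as an operator identity.
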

	\begin{proof}
		Firstly,we proof $$\sum_{i=1}^{2}  \mu _{i,k} =c_{k} H_{k} .$$
		Since,$$P_{r} (E_{i} )=\mu _{i,k}E_{i},$$\\
		and $$\mu _{i,k}=\sum_{i_{1}<i_{2}<\cdots <i_{r},i_{j\ne}  }^{} k_{i_{1} }\cdots k_{i_{r} }.$$\\
		Therefore,$$tr(P_{r})=\sum_{i=1}^{2}\mu _{i,r},$$\\
		moreover,$$H_{r} =\frac{S_{r} }{c_{r} } =\frac{ \sum_{1}^{2}\mu _{i,r} }{c_{r} },$$\\
		So $$tr(P_{r})=c_{r}H_{r}.$$
		
		Next,we prove $\sum_{i=1}^{2} \mu _{i,k}<E_{i} ,\bigtriangledown h>=<P_{k} \bigtriangledown h,\bigtriangledown h>$.\\
		Since,$$P_{r}=\sum_{i=1}^{2}\mu _{i,r}E_{i}. $$\\
		We can get.
		\begin{equation*}
			\begin{aligned}
				<E_{i},\bigtriangledown h><E_{i},\bigtriangledown h>
				&=\sum_{i=1}^{2}< \mu _{i,r}E_{i} ,\bigtriangledown h><E_{i},\bigtriangledown h>\\
				&=<P_{k}\bigtriangledown h,\bigtriangledown h> 
			\end{aligned}
		\end{equation*}
	\end{proof}
	\begin{thm}\label{U3.3 }
		let $\Sigma$ be a compact SWET surface,wuich  embedded in  $R\times \rho M^{2}$,and $ \rho$ is a positive increasing function,$\partial \Sigma \subset M^{2} \times \left \{ t_{0}  \right \},t_{0} \in \mathbb{R}$.Suppose the Guassian curvature of $M$ is $K_{M}$, $K_{M}\ge Sup_{R} ((\rho^{'})^{2}-\rho^{''}\rho)$,$ H_{k} \ge Sup_{(-\infty ,t_{0} ]} \mathcal{H} _{k},k=1,2. $ if $f^{'}> 0,1-2ff^{'}>0,f^{2} +x(1-4ff^{'})>0,f-2f^{'}x>0.$then,
		
		\[h\le t_{0}+\frac{\rho(maxh)}{c\rho(t_{0})},c=\min_{\Sigma } (\frac{ \rho^{'}}{\rho} +\frac{f^{2} +x(1-4ff^{'}}{f-2f^{'}x} ).
		\]
	\end{thm}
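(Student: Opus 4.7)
The plan is to apply the elliptic operator $L_f=\tfrac{1-2ff'}{2}\Delta+f'L_1$ (elliptic by Lemma~\ref{L2.6}) to a carefully chosen combination of $\sigma(h)=\int_{t_0}^{h}\rho(r)\,dr$ and $\tilde\Theta=\rho\,\Theta$, and then invoke the maximum principle. First I would invoke Lemma~\ref{L2.7}(ii), whose hypotheses are in force, to deduce $h\ge t_0$ on all of $\Sigma$, so the argument lives in the slab $\{t\ge t_0\}$. Next I would compute $L_f\sigma(h)$ using Lemma~\ref{L2.8}(b) with $r=0,1$ and $n=2$ (so $c_0=c_1=2$). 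Plugging in $H_1=f$ and $H_2=f^2-x$ with $x=H_1^2-H_2$ collapses the $\rho'$-coefficient to $1$ and the $\rho\Theta$-coefficient to $f-2f'x$, giving
\[
L_f\sigma(h)=\rho'(h)+\rho(h)\,\Theta\,(f-2f'x).
\]

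The key calculation is $L_f\tilde\Theta$. Here I would apply Lemma~\ref{L2.8}(c) for $r=0,1$; the gradient terms $\langle\nabla H_1,\nabla h\rangle$ and $\langle\nabla H_2,\nabla h\rangle$ combine to $(1-2ff')\langle\nabla H_1,\nabla h\rangle+f'\langle\nabla H_2,\nabla h\rangle$, which vanishes by Lemma~\ref{L2.5}. A short algebraic simplification (using $2HH_1-H_2=f^2+x$ and Lemmas~\ref{L3.1} and \ref{L3.2} to rewrite the curvature sum) then yields
\[
L_f\tilde\Theta=-\rho'(h)(f-2f'x)-\rho(h)\,\Theta\,\bigl[f^2+x(1-4ff')\bigr]-\tilde\Theta\Bigl(\tfrac{K_M}{\rho^2}+(\log\rho)''\Bigr)Q,
\]
where $Q\ge 0$ collects the $|\nabla h|^2$ and $\langle P_1\nabla h,\nabla h\rangle$ terms (its nonnegativity uses $1-2ff'>0$ and $f'>0$). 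The curvature hypothesis $K_M\ge\sup_{\mathbb{R}}\bigl((\rho')^2-\rho''\rho\bigr)$ is exactly what makes $\tfrac{K_M}{\rho^2}+(\log\rho)''\ge 0$, so with an appropriate sign of $\tilde\Theta$ this entire residual term has controlled sign.

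Next I would form the test function $\Psi=\tilde\Theta+c\,\sigma(h)+\rho(t_0)$ (with the constant $c$ of the theorem) and verify $L_f\Psi\le 0$ on $\Sigma$. The structural identity behind the definition of $c$ is that, pointwise,
\[
\tfrac{\rho'}{\rho}+\tfrac{f^2+x(1-4ff')}{f-2f'x}\ge c,
\]
which, after multiplying through by $\rho(f-2f'x)>0$, exactly translates the inequality $L_f\tilde\Theta+c\,L_f\sigma(h)\le 0$ (the curvature residual being handled in the previous step). The function $\Psi$ satisfies $\Psi=\tilde\Theta+\rho(t_0)\ge 0$ on $\partial\Sigma$ since $|\Theta|\le 1$ and $\rho=\rho(t_0)$ there. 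The elliptic maximum principle (Lemma~\ref{L2.6}) then gives $\Psi\ge 0$ on all of $\Sigma$. Finally, at a point $p^*\in\Sigma$ where $h$ attains its maximum $h^*$, one has $\nabla h=0$, hence $T=\Theta N$ and $\Theta=-1$ (with the outward orientation consistent with Step~1), so $\tilde\Theta(p^*)=-\rho(h^*)$. Substituting into $\Psi(p^*)\ge 0$,
\[
-\rho(h^*)+c\bigl(\sigma(h^*)-\sigma(t_0)\bigr)+\rho(t_0)\ge 0,
\]
and using $\sigma(h^*)-\sigma(t_0)\le \rho(h^*)(h^*-t_0)$ (since $\rho$ is increasing), the claimed estimate $h\le t_0+\tfrac{\rho(h^*)}{c\rho(t_0)}$ drops out after rearranging.

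The main obstacle I anticipate is Step~3, the verification that the combination $L_f\tilde\Theta+c\,L_f\sigma(h)$ has the right sign: one must simultaneously control (a) the algebraic identity that the coefficients of $\rho'$ and $\rho\Theta$ collapse correctly under the four sign hypotheses $f'>0$, $1-2ff'>0$, $f^2+x(1-4ff')>0$, $f-2f'x>0$, and (b) the curvature residual, where both $\tfrac{K_M}{\rho^2}+(\log\rho)''\ge 0$ and the nonnegativity of the bracket $Q$ (which uses that for $n=2$ the operator $P_1$ is $\mathrm{tr}(A)I-A$) must be deployed in tandem. The constant $c$ is, by design, the sharpest value for which this balancing works throughout $\Sigma$.
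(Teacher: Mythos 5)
Your overall strategy is the same as the paper's: establish $h\ge t_0$ via Lemma \ref{L2.7}, compute $L_f$ of $\sigma(h)$ and of $\tilde{\Theta}$ from Lemma \ref{L2.8}, kill the gradient terms with Lemma \ref{L2.5}, control the curvature residual with the hypothesis on $K_M$ together with Lemmas \ref{L3.1} and \ref{L3.2}, and finish with the maximum principle for the elliptic operator $L_f$ applied to the combination $c\,\sigma(h)+\tilde{\Theta}$ evaluated at the maximum of $h$. Up to that point the two arguments coincide.

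However, there is a genuine directional error in your final step. You claim $L_f\Psi\le 0$ for $\Psi=\tilde{\Theta}+c\,\sigma(h)+\rho(t_0)$ and deduce $\Psi\ge 0$ on $\Sigma$; at the top point this reads $c\bigl(\sigma(h^*)-\sigma(t_0)\bigr)\ge \rho(h^*)-\rho(t_0)$, which is a \emph{lower} bound on $\sigma(h^*)$, and combining it with $\sigma(h^*)-\sigma(t_0)\le\rho(h^*)(h^*-t_0)$ yields a lower bound on $h^*-t_0$, not the upper bound asserted in the theorem. Moreover the sign $L_f\Psi\le 0$ is inconsistent with your own computation: the residual term $-A-B$ is nonnegative and, since $\Theta<0$ and $f^2+x(1-4ff')>0$, the term $-\rho\,\Theta\bigl(f^2+x(1-4ff')\bigr)$ is positive, so the natural conclusion is $L_f\bigl(c\,\sigma(h)+\tilde{\Theta}\bigr)\ge 0$. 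That is what the paper proves; it then applies the maximum principle to get $\max_\Sigma\bigl(c\,\sigma(h)+\tilde{\Theta}\bigr)=\max_{\partial\Sigma}\bigl(c\,\sigma(h)+\tilde{\Theta}\bigr)\le 0$ (using $\sigma(t_0)=0$ and $\tilde{\Theta}<0$ on $\partial\Sigma$), whence $c\,\sigma(h^*)\le\rho(h^*)$ at the top point, and finally uses the \emph{lower} bound $\sigma(h^*)-\sigma(t_0)\ge\rho(t_0)(h^*-t_0)$ (valid since $\rho$ is increasing and $h^*\ge t_0$) to obtain $h^*\le t_0+\rho(h^*)/(c\,\rho(t_0))$. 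Your use of the opposite comparison $\sigma(h^*)-\sigma(t_0)\le\rho(h^*)(h^*-t_0)$ is the second place the direction flips. To repair the argument, reverse both: prove the subsolution inequality $L_f\varphi\ge 0$ for $\varphi=c\,\sigma(h)+\tilde{\Theta}$ and pair it with the boundary estimate from above and the increasing-$\rho$ bound from below.
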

	\begin{proof}
		By lemma 2.10,we can get $h\ge t_{0}$.Because $\Sigma $ is caompact,there is a point $p_{0}$,which makes $h$ reach the maximum,i.e.$\bigtriangledown h(p_{0})=0,\Theta (p_{0})=\pm 1$.According to lemma $0\ge \bigtriangleup h(p_{0})=n\mathcal{H}(t_{0} )+n\Theta (p_{0})H_{1} (p_{0})>n\Theta (p_{0})H_{1} (p_{0}).$ so $\Theta (p_{0})=-1.$ And $\Theta$ is a negative  function.
		
		Let $\varphi =c\sigma (h)+\tilde{\Theta }$ ,then  $L_{f} \varphi=L_{f}(c\sigma (h)+\tilde{\Theta })= cL_{f} (\sigma)+L_{f} (\tilde{\Theta } )$
		
		\begin{equation*}
			\begin{split}
				=c[(\frac{1-2ff^{'}}{2}) \cdot 2(\rho^{'}+\rho\Theta H_{1})+2f^{'}(\rho^{'}H_{1}+\rho\Theta H_{2})]\\
				+\frac{1-2ff^{'}}{2}[-2\rho<\bigtriangledown H_{1},\bigtriangledown h>-2\rho^{'} H_{1}-2\rho\Theta(2H_{1} ^{2}-H_{2})]-A\\ 
				+f^{'}[-\rho^{'}<\bigtriangledown H_{2},\bigtriangledown h>-2\rho^{'}H_{2}-2\rho\Theta H_{1}H_{2})-B\\
				=c[(1-2ff^{'})(\rho^{'}+\rho\Theta H_{1})+2f^{'}\rho^{'}H_{1}+2f^{'}\rho\Theta H_{2}]\\
				+(1-2ff^{'})[-\rho^{'}H_{1}-\rho\Theta (2H_{1} ^{2}-H_{2})]+f^{'}(-2\rho^{'}H_{2}-2\rho\Theta H_{1}H_{2})-A-B 
			\end{split}
		\end{equation*}
		
		and,\begin{equation*}
			\begin{split}
				A=\frac{1-2ff^{'}}{2}[\frac{\tilde{\Theta } }{\rho^{2}}  \sum_{i=1}^{2} \mu _{i,0}K_{M}(N^{*},E_{i}^{*})\left |N^{*}\wedge E_{i}^{*} \right |^{2}\\
				+\tilde{\Theta } (\log  \rho)^{''}(c_{0} \left | \bigtriangledown h \right |^{2}-<P_{0} \bigtriangledown h,\bigtriangledown h>)].
			\end{split}
		\end{equation*}
		\begin{equation*}
			\begin{split}
				B= f^{'}[\frac{\tilde{\Theta } }{\rho^{2}}  \sum_{i=1}^{2} \mu _{i,1}K_{M}(N^{*},E_{i}^{*})\left |N^{*}\wedge E_{i}^{*} \right |^{2}\\
				+\tilde{\Theta } (\log  \rho)^{''}(c_{1} \left | \bigtriangledown h \right |^{2}-<P_{1} \bigtriangledown h,\bigtriangledown h>)].
			\end{split}
		\end{equation*}
		
		Let $x=H_{1} ^{2}-H_{2},$
		\begin{equation*}
			\begin{split}
				L_{f} \varphi=c[(1-2ff^{'})(\rho^{'}+\rho\Theta f)+2ff^{'}\rho^{'}+2f^{'}\rho\Theta(f^{2}- x)]\\
				+(1-2ff^{'})[-\rho^{'}f-\rho\Theta (f^{2}+x)]+f^{'}[-2\rho^{'}(f^{2}-x)-2\rho\Theta f(f^{2}-x)]-A-B\\
				=c(\rho^{'}+\rho\Theta f-2ff^{'}\rho^{'}-2f^{2}f^{'}\rho\Theta+2ff^{'}\rho^{'}+2f^{2}f^{'}\rho\Theta-2f^{'}\rho\Theta x)\\
				-\rho^{'}f-\rho\Theta f^{2}-\rho\Theta x+2f^{2}f^{'}\rho^{'}+2f^{3}f^{'}\rho\Theta +2ff^{'}\rho\Theta x\\
				-2\rho^{'}f^{2}f^{'}+2f^{'}\rho^{'}x-2\rho\Theta f^{3}f^{'}+2\rho\Theta ff^{'}x-A-B\\
				=c(\rho^{'}+\rho\Theta (f-2f^{'}x))-\rho^{'}(f-2f^{'}x)-\rho\Theta(f^{2}+x(1-4ff^{'}))-A-B.   
			\end{split}
		\end{equation*}
		
		Next,we prove $-A-B\ge 0$
		
		Let $$\alpha =Sup_{R} ((\rho^{'})^{2}-\rho^{'}\rho)$$
		since $P_{1}$ is positive definite ,\\
		then $$\mu _{i,k}>0,k=1,2$$
		and then
		\[\mu _{i,k}K_{M}(N^{*},E_{i}^{*})\left |N^{*}\wedge E_{i}^{*} \right |^{2}\ge \alpha \mu _{i,k}\left |N^{*}\wedge E_{i}^{*} \right |^{2}.\]
		
		But,$$\left |N^{*}\wedge E_{i}^{*} \right |^{2}=\left | \bigtriangledown h \right | ^{2} -<E_{i} ,\bigtriangledown h> ^{2}.$$
		
		so,
		\begin{equation*}
			\begin{split}
				\sum_{i=1}^{2} \mu _{i,k}K_{M}(N^{*},E_{i}^{*})\left |N^{*}\wedge E_{i}^{*} \right |^{2}\ge \sum_{i=1}^{2}\alpha \mu _{i,k}(\left | \bigtriangledown h \right | ^{2} -<E_{i} ,\bigtriangledown h> ^{2} )\\
				=\alpha (c_{k} H_{k} \left | \bigtriangledown h \right | ^{2}-<P_{k} \bigtriangledown h,\bigtriangledown h>)\ge 0. 
			\end{split}
		\end{equation*}
		
		Therefore we can get:
		\begin{equation*}
			\begin{split}
				\frac{1 } {\rho^{2}}  \sum_{i=1}^{2} \mu _{i,k}K_{M}(N^{*},E_{i}^{*})\left |N^{*}\wedge E_{i}^{*} \right |^{2}+ (\log  \rho)^{''}\\
				(c_{k} H_{k} \left | \bigtriangledown h \right |^{2}-<P_{k} \bigtriangledown h,\bigtriangledown h>)\ge 0.    
			\end{split}
		\end{equation*}
		
		According to the conditons in the theorem 
		$$1-2ff^{'}>0,f^{'}>0,$$and $\tilde{\Theta } <0$
		
		finally we have $$-A-B\ge 0.$$
		
		So ,if we want to $$L_{f} \varphi \ge 0$$,we only prove
		\[c(\rho^{'}+\rho\Theta (f-2f^{'}x))-\rho^{'}(f-2f^{'}x)-\rho\Theta(f^{2}+x(1-4ff^{'}))\ge 0.\]
		
		It is clear from the assumption \[\rho^{'}\ge 0,f-2f^{'}x> 0.\]
		
		so,
		\begin{equation*}
			\begin{aligned}
				c&=\le \frac{\rho^{'}(f-2f^{'}x)+\rho\Theta(f^{2}+x(1-4ff^{'}))}{\rho\Theta (f-2f^{'}x)} \\
				&\le\frac{\rho^{'}}{\rho\Theta}+\frac{f^{2}+x(1-4ff^{'})}{f-2f^{'}x}\\
				&\le\frac{\rho ^{'} }{\rho} +\frac{f^{2}+x(1-4ff^{'})}{f-2f^{'}x}.  
			\end{aligned}
		\end{equation*}
		
		Let $$c=\min_{\Sigma } (\frac{\rho ^{'} }{\rho} +\frac{f^{2}+x(1-4ff^{'})}{f-2f^{'}x}),$$
		
		then $$L_{f} \varphi \ge 0.$$
		
		According to the maxmum principle,
		$$\max_{\Sigma } \varphi=\max_{\partial\Sigma }\varphi.$$
		
		then $$c\sigma (h)- \rho\le c\sigma (t_{0} ),$$
		
		in other words $$c(\sigma (h)-\sigma (t_{0} ))\le \rho.$$
		
		And $\rho$ is a increasing function,so
		$$\forall t\ge t_{0} ,
		\sigma (t)-\sigma (t_{0} )\ge\rho(t_{0})(t-t_{0}).$$
		
		Since $$h\ge t_{0},$$
		
		we have $$\sigma (h)-\sigma (t_{0} )\ge\rho(t_{0})(h-t_{0}),$$
		
		and then
		$$c\rho(t_{0})(h-t_{0})\le\rho.$$
		
		finally we get the height estimate of SWET surface 
		$$h\le t_{0}+\frac{\rho (maxh)}{c\rho(t_{0})} .$$
		
	\end{proof}
	
	\vspace{2.12mm}
	\subsection*{Acknowledgment}
	I am grateful to Professor Hengyu  (Chongqing University) for many useful discussions and suggestions.

\end{document}